\def\vint_#1{\mathchoice%
          {\mathop{\kern 0.2em\vrule width 0.6em height 0.69678ex depth -0.58065ex
                  \kern -0.8em \intop}\nolimits_{\kern -0.4em#1}}%
          {\mathop{\kern 0.1em\vrule width 0.5em height 0.69678ex depth -0.60387ex
                  \kern -0.6em \intop}\nolimits_{#1}}%
          {\mathop{\kern 0.1em\vrule width 0.5em height 0.69678ex
              depth -0.60387ex
                  \kern -0.6em \intop}\nolimits_{#1}}%
          {\mathop{\kern 0.1em\vrule width 0.5em height 0.69678ex depth -0.60387ex
                  \kern -0.6em \intop}\nolimits_{#1}}}
\def\vintslides_#1{\mathchoice%
          {\mathop{\kern 0.1em\vrule width 0.5em height 0.697ex depth -0.581ex
                  \kern -0.6em \intop}\nolimits_{\kern -0.4em#1}}%
          {\mathop{\kern 0.1em\vrule width 0.3em height 0.697ex depth -0.604ex
                  \kern -0.4em \intop}\nolimits_{#1}}%
          {\mathop{\kern 0.1em\vrule width 0.3em height 0.697ex depth -0.604ex
                  \kern -0.4em \intop}\nolimits_{#1}}%
          {\mathop{\kern 0.1em\vrule width 0.3em height 0.697ex depth -0.604ex
                  \kern -0.4em \intop}\nolimits_{#1}}}
 \def\1{\raisebox{2pt}{\rm{$\chi$}}}
\newtheorem{theorem}{Theorem}[section]
\newtheorem{lemma}[theorem]{Lemma}
\newtheorem{definition}[theorem]{Definition}
\newtheorem{remark}[theorem]{Remark}
\newtheorem{example}[theorem]{Example}
\newcommand{\R}{{\mathbb R}}
 \def\1{\raisebox{2pt}{\rm{$\chi$}}}
\def\vint_#1{\mathchoice%
          {\mathop{\kern 0.2em\vrule width 0.6em height 0.69678ex depth -0.58065ex
                  \kern -0.8em \intop}\nolimits_{\kern -0.4em#1}}%
          {\mathop{\kern 0.1em\vrule width 0.5em height 0.69678ex depth -0.60387ex
                  \kern -0.6em \intop}\nolimits_{#1}}%
          {\mathop{\kern 0.1em\vrule width 0.5em height 0.69678ex
              depth -0.60387ex
                  \kern -0.6em \intop}\nolimits_{#1}}%
          {\mathop{\kern 0.1em\vrule width 0.5em height 0.69678ex depth -0.60387ex
                  \kern -0.6em \intop}\nolimits_{#1}}}
\def\vintslides_#1{\mathchoice%
          {\mathop{\kern 0.1em\vrule width 0.5em height 0.697ex depth -0.581ex
                  \kern -0.6em \intop}\nolimits_{\kern -0.4em#1}}%
          {\mathop{\kern 0.1em\vrule width 0.3em height 0.697ex depth -0.604ex
                  \kern -0.4em \intop}\nolimits_{#1}}%
          {\mathop{\kern 0.1em\vrule width 0.3em height 0.697ex depth -0.604ex
                  \kern -0.4em \intop}\nolimits_{#1}}%
          {\mathop{\kern 0.1em\vrule width 0.3em height 0.697ex depth -0.604ex
                  \kern -0.4em \intop}\nolimits_{#1}}}
\newcommand{\aveint}[2]{\mathchoice%
          {\mathop{\kern 0.2em\vrule width 0.6em height 0.69678ex depth -0.58065ex
                  \kern -0.8em \intop}\nolimits_{\kern -0.45em#1}^{#2}}%
          {\mathop{\kern 0.1em\vrule width 0.5em height 0.69678ex depth -0.60387ex
                  \kern -0.6em \intop}\nolimits_{#1}^{#2}}%
          {\mathop{\kern 0.1em\vrule width 0.5em height 0.69678ex depth -0.60387ex
                  \kern -0.6em \intop}\nolimits_{#1}^{#2}}%
          {\mathop{\kern 0.1em\vrule width 0.5em height 0.69678ex depth -0.60387ex
                  \kern -0.6em \intop}\nolimits_{#1}^{#2}}}
\newcommand{\dist}{\operatorname{dist}}
\begin{document}

\title[A lower bound for the principal eigenvalue]
{A lower bound for the principal eigenvalue of fully nonlinear elliptic operators}

\author[P. Blanc]{Pablo Blanc}

\address{P. Blanc
\hfill\break\indent Dpto. de Matem{\'a}ticas, FCEyN,
Universidad de Buenos Aires, \hfill\break\indent 1428, Buenos Aires,
Argentina. }
\email{{\tt pblanc@dm.uba.ar,} }

\keywords{principal eigenvalue, lower bounds, fully nonlinear elliptic PDEs
\indent 2010 {\it Mathematics Subject Classification.} 35P15, 35P30, 35J60, 35J70}

\begin{abstract} 
In this article we present a new technique to obtain a lower bound for the principal Dirichlet eigenvalue of a fully nonlinear elliptic operator.
We ilustrate the construction of an appropriate radial function required to obtain the bound in several examples.
In particular we use our results to prove that $\lim_{p\to \infty}\lambda_{1,p}=\lambda_{1,\infty}=\left(\frac{\pi}{2R}\right)^2$ where $\lambda_{1,p}$ and $\lambda_{1,\infty}$ are the principal eigenvalue for the homogeneous $p$-laplacian and the homogeneous infinity laplacian respectively.
\end{abstract}

\maketitle

\section{Introduction}
\label{Sect.intro}
\setcounter{equation}{0}

Let $\Omega\subset\R^n$ be a domain and $Lu = F (u,\nabla u,D^2 u)$ a differential operator.
We consider the Dirichlet eigenvalue problem
\begin{equation}
\label{eq.eigen}
\begin{cases}
Lu+\lambda u=0 \quad &\text{in } \Omega
\\
u=0 \quad &\text{on }\partial\Omega.
\end{cases}
\end{equation}
We are interested in the principal eigenvalue of $-L$, that is the smallest number $\lambda\in\R$ for which the Dirichlet eigenvalue problem \eqref{eq.eigen} has a non-trivial solution.
Our goal here is to introduce a novel technique to obtain a lower bound for this value.

Along the whole paper we will consider solutions in the viscosity sense (see \cite{crandall1992user}), this will allow us to consider fully nonlinear operators like $Lu = F (u,\nabla u,D^2 u)$.
In this general framework we define the principal eigenvalue through the maximum principle as in \cite{berestycki1994principal}.
That is, we let
\[
\lambda_1(\Omega) = \sup\{ \lambda\in\R : \exists v \in C(\Omega) \text{ satisfying } v(x) > 0\  \forall x \in\Omega \text{ and }L v+\lambda v \leq 0 \}.
\]
This definition allows us to consider operators in non-divergence form.
In \cite{berestycki1994principal} the authors proved that for uniformly elliptic linear operators
the value $\lambda_1(\Omega)$ defined above is indeed the principal eigenvalue of $-L$.
This work opened the path to develop an eigenvalue theory for nonlinear operators. 

Let us mention some previous work that deal with the operators that we will consider as examples to illustrate our general result.
The Pucci extremal operators were studied in \cite{busca2005nonlinear}.
In \cite{birindelli2004comparison,birindelli2006} it is proved that the number defined above is the principal eigenvalue for a class of homogeneous fully nonlinear operators which includes the homogeneous $p$-laplacian (see also \cite{kawohl2016geometry} and \cite{martinez2014limit}). In \cite{juutinen2007principal} this was done for the homogeneous infinity laplacian.
The eigenvalue problem that arises as limit of the problem for the $p$-laplacian was considered in \cite{juutinen1999eigenvalue}.
Other questions were addressed in more recent work as problems in non-smooth domains \cite{birindelli2009eigenvalue}, unbounded domains \cite{berestycki2015generalizations} and simplicity of the first eigenvalue \cite{birindelli2010regularity}.

The lower bound that we obtain in this article depends on the largest radius of a ball included in $\Omega$. 
We define
\[
R=\max_{x\in\bar\Omega}\dist(x,\Omega^c).
\]

From the definition of $\lambda_1$ it is clear that the first eigenvalue is monotone with respect to the domain, that is
\[
\Omega_1 \subset \Omega_2 \Rightarrow  \lambda_1(\Omega_2) \leq \lambda_1(\Omega_1).
\]
Then
\[
\lambda_1(\Omega)\leq \lambda_1(B_R)
\]
and hence we can obtain an upper bound for the principal eigenvalue by computing this value for a ball.
We can do this by constructing a radial positive eigenfunction.
Therefore, we have to provide a radial solution $\phi(r)$ to the equation \eqref{eq.eigen} such that $\phi(R)=0$ and $\phi'(0)=0$. 
The eigenfunction will look like the one in Figure~\ref{fig:eigenfunction}.
In this way we can obtain an upper bound for the principal eigenvalue by solving certain ODE.

\begin{figure}
    \centering
    \begin{subfigure}[b]{0.5\textwidth}
        \includegraphics[width=\textwidth]{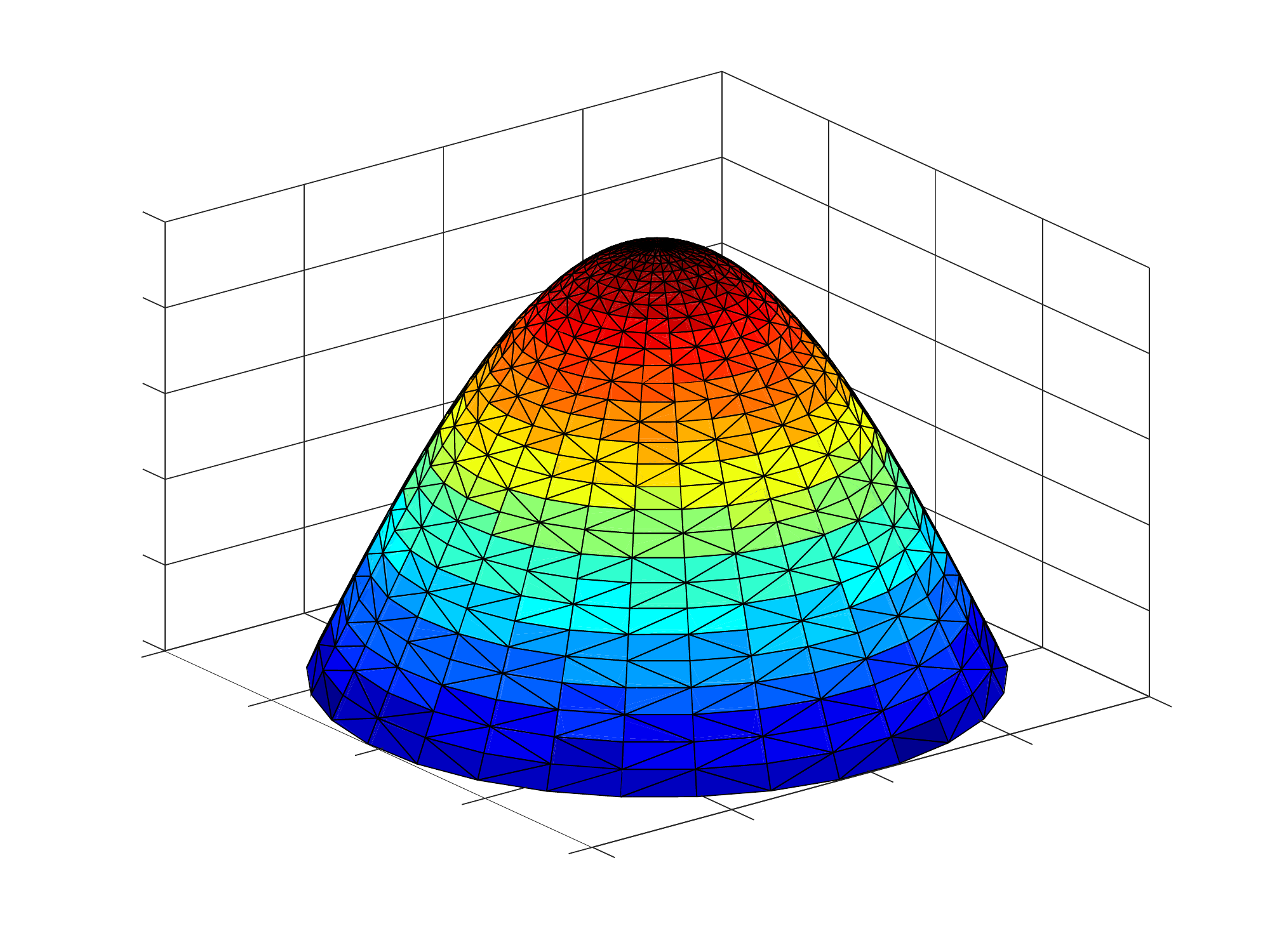}
        \caption{A eigenfunction in a ball.\\ \ }
        \label{fig:eigenfunction}
    \end{subfigure}
    ~ 
    \begin{subfigure}[b]{0.5\textwidth}
        \includegraphics[width=\textwidth]{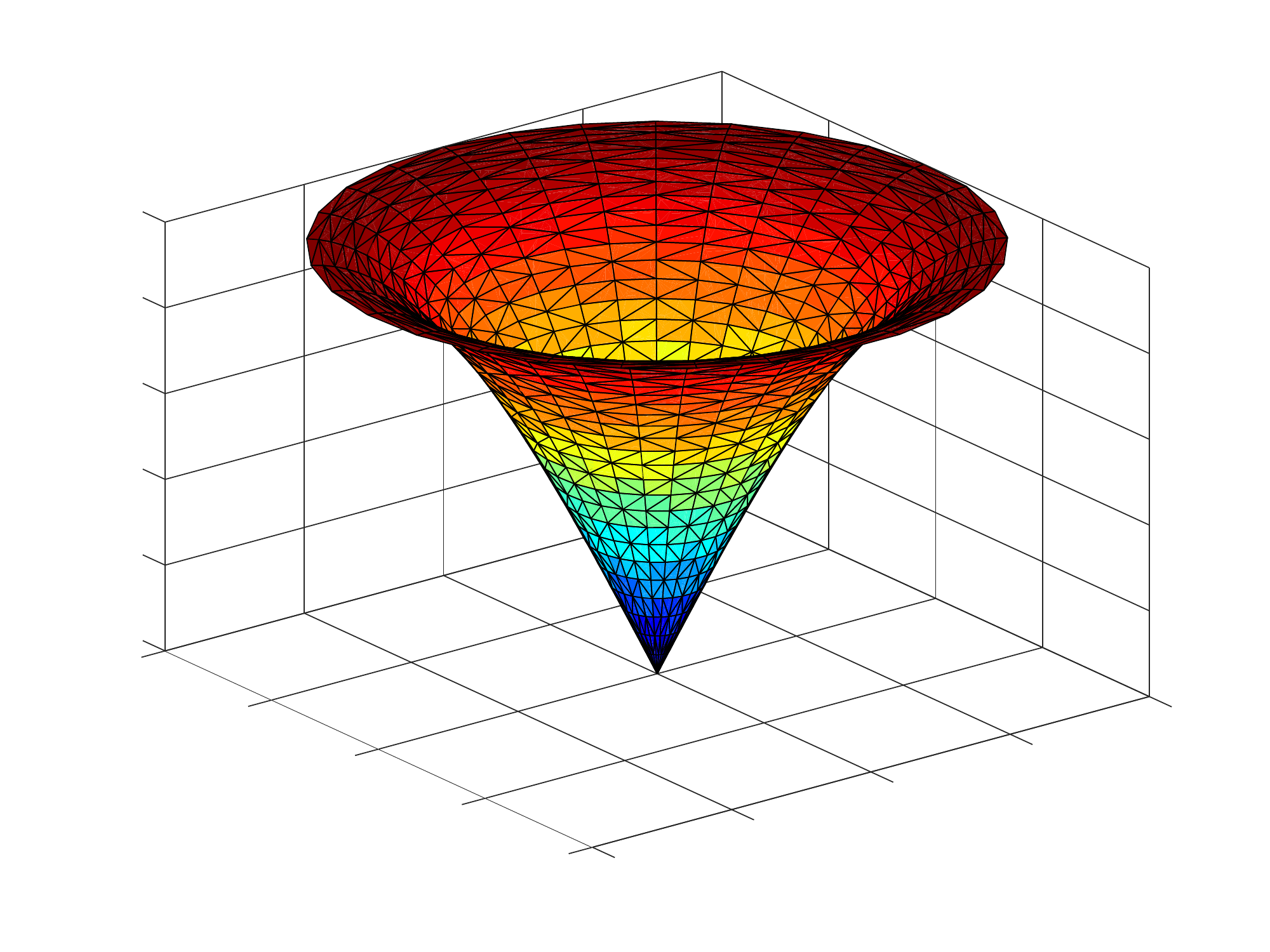}
        \caption{The radial function required in the main theorem.}
        \label{fig:radial}
    \end{subfigure}
    \caption{Radial functions that allow us to obtain bounds for the principal eigenvalue}
    \label{fig:functions}
\end{figure}

Our main result provides an analogous construction to obtain a lower bound for the principal eigenvalue.
This time we require a radial solution $\phi(r)$ to the equation $Lu+\lambda u=0$ defined in the punctured ball $B_R\setminus\{0\}$ such that $\phi'(R)=0$ and $\phi(0)=0$. 
The function will look like the one shown in Figure~\ref{fig:radial}.
In this way we can obtain a lower bound for the principal eigenvalue by solving an ODE.
The lower bound will be the value of $\lambda$ for which we can solve the ODE.

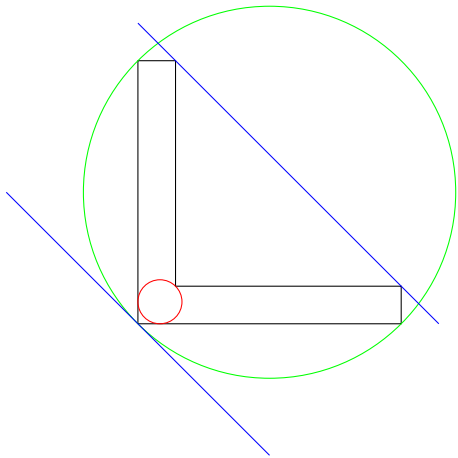
\begin{figure}
    \centering
\begin{tikzpicture}[scale=0.5]
\draw (0,0) -- (7,0) -- (7,1) -- (1,1) -- (1,7) -- (0,7) -- (0,0);
\draw [red] ({1/(1+1/sqrt(2))},{1/(1+1/sqrt(2))}) circle ({1/(1+1/sqrt(2))});
\draw [green] (3.5,3.5) circle ({3.5*sqrt(2)});
\draw [blue](3.5,-3.5) -- (-3.5,3.5);
\draw [blue] (8,0) -- (0,8);
\end{tikzpicture}
\caption{In black an L shaped domain, in red the ball of maximum radius contained in the domain, in green the ball of minimum radius that contains the domain and in blue the boundary of the narrowest strip that contains the domain.}
\label{fig:Lshape}
\end{figure}

Since our bound only depends on $R$, our technique is well suited for example for L shaped domains where considering a ball or a strip that contains $\Omega$ gives poorer results or can't be done for example if the L shaped domain is unbounded.
If we consider the L shaped domain $\Omega=\{(x,y)\in\R^2: 0\leq x,y\leq \ell \text{ and } \min\{x,y\}\leq 1\}$, we have $R=\frac{1}{1+\frac{1}{\sqrt 2}}$ but the radius of a ball and the width of a strip that contains $\Omega$ grows linearly with $\ell$, see Figure~\ref{fig:Lshape}.

We also compare our result with the classical Rayleigh-Faber-Krahn inequality in Example~\ref{nosol0}.
Even more, our technique is well suited to obtain sharp bounds for certain operators as will be shown in the examples section.

In the next section we state and prove our main result and then we outline some extensions.
Later, in Section \ref{Sect.examples}, we compute the bound explicitly for the homogeneous infinity laplacian, for the homogeneous $p$-laplacian and for other operators.
We prove that for the homogeneous infinity laplacian the principal eigenvalue is $\lambda_{1,\infty}=\left(\frac{\pi}{2R}\right)^2$.
In addition, our bound for the homogeneous $p$-laplacian proves that $\lim_{p\to\infty} \lambda_{1,p}= \lambda_{1,\infty}$.

\section{Main theorem}
\label{Sect.main}
\setcounter{equation}{0}

Let $\Omega\subset\R^n$ be a domain (not necessarily bounded) and $Lu := F (u,\nabla u,D^2 u)$ a fully nonlinear operator.
Here ${F:\R\times (\R^n-\{0\})\times S^{n\times n}\to\R}$ where $S^{n\times n}$ denotes the set of real $n\times n$ symmetric matrices.
As we are interested in operators like the homogeneous infinity laplacian and $p$-laplacian which are not well defined where the gradient vanishes we will give a suitable definition of solution that includes these operators. 

Let us recall the definition of viscosity solution.
Since $F$ may not be continuous when the gradient vanishes we need to consider the lower semicontinous $F_*$ and upper semicontinous $F^*$ envelopes of $F$. That is,
\[
F^*(s,q,Y)=\limsup_{(r,p,X)\to (s,q,Y)}F(r,p,X)
\]
and
\[
F_*(s,q,Y)=\liminf_{(r,p,X)\to (s,q,Y)}F(r,p,X).
\]

\begin{definition} \label{def.sol.viscosa}\rm
We consider the equation
\[
F (u,\nabla u,D^2 u)+\lambda u = 0.
\]
\begin{enumerate}
\item A lower semi-continuous function $ u $ is a viscosity
supersolution if for every $ \psi \in C^2$ such that $ \psi $
touches $u$ at $x \in \Omega$ strictly from below, we have
$$F_*(\psi (x),\nabla \psi (x), D^2 \psi (x))+\lambda\psi(x)\leq 0.$$

\item An upper semi-continuous function $u$ is a subsolution if
for every $ \psi \in C^2$ such that $\psi$ touches $u$ at $ x \in
\Omega$ strictly from above, we have
$$F^*(\psi (x),\nabla \psi (x), D^2 \psi (x))+\lambda\psi(x)\geq 0.$$

\item Finally, $u$ is a viscosity solution if it is both a sub- and supersolution.
\end{enumerate}
\end{definition}

\begin{remark}\rm
We have given above a definition of viscosity solution that is well suited for the equations that we will treat in the next section.
The definition can be slightly different depending on the context, see Section 9 in \cite{crandall1992user}.
We want to remark that these differences have no effect in our results.
\end{remark}

As we have mentioned in the introduction, we want to obtain a lower bound for the principal eigenvalue of $-L$ given by
\[
\lambda_1(\Omega) = \sup\{ \lambda\in\R : \exists v \in C(\Omega) \text{ satisfying } v(x) > 0\  \forall x \in\Omega \text{ and }L v+\lambda v \leq 0 \},
\]
where the last inequality holds in the viscosity sense. 
Let us recall that here
\[
R=\max_{x\in\bar\Omega}\dist(x,\Omega^c).
\]

We are ready to state and prove the main theorem of this paper.

\begin{theorem}
\label{thm.main}
Suppose $\phi(r)$ is an increasing radial function defined in $B_{r}$  for some $r>R$ with $\phi(0)=0$ and $\lambda\in\R$  is such that
\[
L\phi + \lambda \phi \leq  0
\]
in $B_r\setminus\{0\}$. Then $\lambda_1(\Omega)\geq\lambda$.
\end{theorem}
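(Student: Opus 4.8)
The plan is to show directly that $\lambda$ is admissible in the supremum defining $\lambda_1(\Omega)$, i.e.\ to exhibit a function $v\in C(\Omega)$ with $v>0$ in $\Omega$ and $Lv+\lambda v\le 0$ in the viscosity (supersolution) sense; once such a $v$ is produced, $\lambda_1(\Omega)\ge\lambda$ by definition. The candidate is
\[
v(x):=\phi\big(d(x)\big),\qquad d(x):=\dist(x,\Omega^c).
\]
Since $\Omega^c$ is closed and $d$ is $1$-Lipschitz, $d$ is continuous; for $x\in\Omega$ we have $0<d(x)\le R<r$, so, as $\phi$ is increasing with $\phi(0)=0$ and is defined on $B_r$, the function $v$ is well defined, continuous and strictly positive on $\Omega$. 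Thus everything reduces to the one-sided viscosity inequality for $v$.

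The geometric point that makes this work is the following: fix $x_0\in\Omega$ and pick (possible because $\Omega^c$ is nonempty and closed) a point $z\in\Omega^c$ realizing $|x_0-z|=d(x_0)$. For every $x$ one has $d(x)\le|x-z|$, with equality at $x_0$; since $\phi$ is increasing this gives
\[
v(x)=\phi\big(d(x)\big)\ \le\ \phi\big(|x-z|\big)=:g(x)\qquad\text{for all }x,
\]
with equality at $x_0$. In words, the radial profile $\phi$ re-centered at $z$ lies above $v$ everywhere and is tangent to it at $x_0$.

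Now the viscosity bookkeeping. Let $\psi\in C^2$ touch $v$ from below at $x_0$, strictly. Since $\psi\le v$ near $x_0$ and $v\le g$ everywhere, with all three equal at $x_0$, the function $\psi$ also touches $g$ from below at $x_0$, and still strictly. Because $F$ does not depend on the space variable, $L$ is translation invariant, so the hypothesis $L\phi+\lambda\phi\le 0$ in $B_r\setminus\{0\}$ is exactly the statement that $g$ is a viscosity supersolution of $Lg+\lambda g\le 0$ in $B_r(z)\setminus\{z\}$. As $0<|x_0-z|=d(x_0)\le R<r$, the point $x_0$ lies in this punctured ball, so the supersolution property of $g$, applied with the test function $\psi$, yields
\[
F_*\big(\psi(x_0),\nabla\psi(x_0),D^2\psi(x_0)\big)+\lambda\psi(x_0)\le 0 .
\]
Since $x_0\in\Omega$ and $\psi$ were arbitrary, $v$ is a viscosity supersolution of $Lv+\lambda v\le 0$ in $\Omega$; together with $v\in C(\Omega)$ and $v>0$ this places $\lambda$ in the set defining $\lambda_1(\Omega)$, hence $\lambda_1(\Omega)\ge\lambda$.

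I expect the only genuine subtlety to be the tangency-from-above comparison $v\le g$ with equality at $x_0$: this is precisely what allows a test function touching $v$ from below to be transferred to $g$, where the hypothesis supplies information, and it is where the monotonicity of $\phi$ (and the choice of the nearest boundary point $z$) is used. The remaining ingredients — continuity and positivity of $v$, translation invariance of $L$, and, if one works with a definition of supersolution based on non-strict touching, the routine reduction via the perturbation $\psi-\varepsilon|x-x_0|^2$ together with the lower semicontinuity of $F_*$ — are standard.
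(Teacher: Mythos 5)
Your proof is correct and follows essentially the same route as the paper: you take $v(x)=\phi(\dist(x,\Omega^c))$, pick a nearest point $z\in\Omega^c$ to the touching point $x_0$, and transfer the test function from $v$ to the translated radial supersolution $\phi(|\cdot-z|)$ via the comparison $v\le\phi(|\cdot-z|)$ with equality at $x_0$. The only differences are cosmetic (you spell out the translation invariance of $L$ and the fact that $x_0$ lies in the punctured ball, which the paper leaves implicit).
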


\begin{figure}
    \centering
    \includegraphics[scale=0.5]{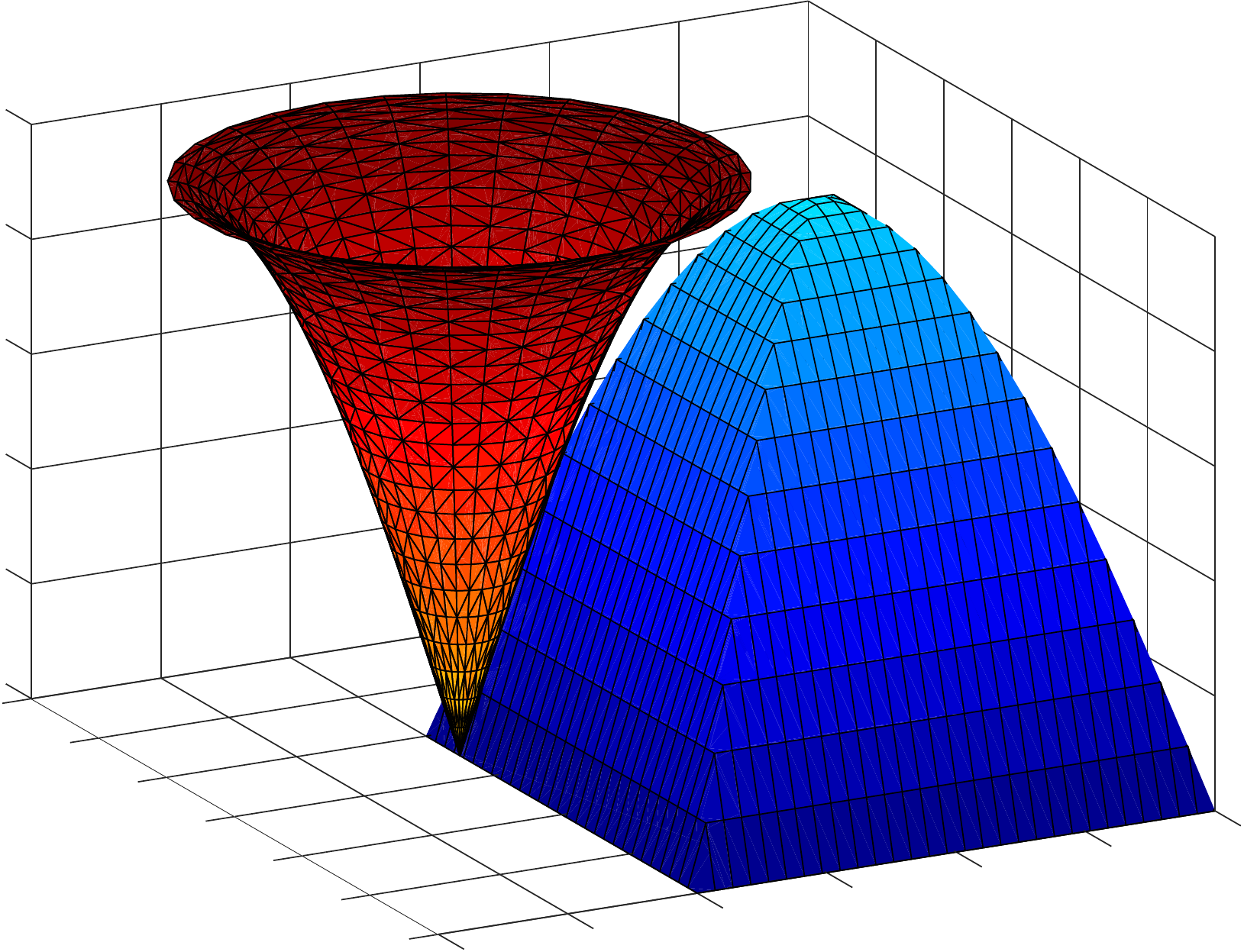}
    \caption{Functions $v$ (blue) and $\phi_{y_0}$ (red) defined in the proof of Theorem~\ref{thm.main} for a square.}
    \label{fig:proof}
\end{figure}

\begin{proof}
Let us consider the continuous function $v:\Omega\to\R$ given by
$$v(x)=\phi(\dist(x,\Omega^c)).$$
Since $\phi$ is positive outside the origin so is $v$ inside $\Omega$.
If we prove that $L v+\lambda v \leq 0$ for the given value of $\lambda$, we obtain the desired inequality.

Let us consider $x_0\in\Omega$ and $\psi \in C^2$ such that it touches $v$ at $x_0$ strictly from below.
Since $\Omega$ is an open set, there exists $y_0\in\partial\Omega$ such that ${\dist(x_0,\Omega^c)=\dist(x_0,y_0)}	$ and we can consider $\phi_{y_0}(x)=\phi(|x-y_0|)$ which is a continuous function defined in $B_r(y_0)$.
Then, since $\phi$ is radial increasing, one get $v\leq \phi_{y_0}$ and coincides with it at $x_0$.
So $\psi$ touches $\phi_{y_0}$ at $x_0$ strictly from below and hence $\psi$ satisfies the inequality.
This shows that $L v+\lambda v \leq 0$ in the viscosity sense as desired.
\end{proof}

\begin{remark}\rm
Given $r$ if we are able to construct $\phi$ for certain $\lambda(r)$ that depends continuously on $r$ since $\lambda_1(\Omega) \geq \lambda(r)$ for all $r>R$, we obtain ${\lambda_1(\Omega) \geq \lambda(R)}$.
\end{remark}

It may be the case that we could not construct $\phi$ as required above (see Example~\ref{nosol0}).
In that case we can modify our construction in order to obtain the lower bound as follows.
Given $\delta>0$, we consider 
\[
\Omega_\delta=\{x:\dist(x,\Omega)<\delta\}
\]
and
\[
R_\delta=\max_{x\in\bar\Omega_\delta}\dist(x,\Omega_\delta^c).
\]
\begin{theorem}
\label{thm.delta}
Suppose $\phi(r)$ is an increasing radial function defined in $B_{r}\setminus B_\delta$  for some $r>R_\delta$ with $\phi=0$ on $\partial B_\delta$ and $\lambda$  is such that
\[
L \phi + \lambda \phi \leq  0
\]
in $B_{r}\setminus B_\delta$. Then $\lambda_1(\Omega)\geq\lambda$.
\end{theorem}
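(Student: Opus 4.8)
The plan is to repeat the proof of Theorem~\ref{thm.main} almost verbatim, with $\Omega$ replaced by the enlarged domain $\Omega_\delta$ and exploiting that the annular function $\phi$ vanishes on the inner sphere $\partial B_\delta$. I would introduce the continuous function
\[
v(x)=\phi\big(\dist(x,\Omega_\delta^c)\big),\qquad x\in\Omega,
\]
and show that it is well defined, strictly positive on $\Omega$, and that $Lv+\lambda v\le 0$ holds in the viscosity sense; the estimate $\lambda_1(\Omega)\ge\lambda$ is then immediate from the definition of $\lambda_1$.

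First I would check that, for $x\in\Omega$, the quantity $\dist(x,\Omega_\delta^c)$ lies in the interior of the annulus $B_r\setminus B_\delta$. On one side, openness of $\Omega$ gives a ball $B_\rho(x)\subset\Omega$, whence $B_{\rho+\delta}(x)\subset\Omega_\delta$ and therefore $\dist(x,\Omega_\delta^c)\ge\rho+\delta>\delta$; on the other side $x\in\Omega\subset\bar\Omega_\delta$, so $\dist(x,\Omega_\delta^c)\le R_\delta<r$. Hence $\dist(x,\Omega_\delta^c)\in(\delta,r)$, so $v$ is well defined and continuous (the map $x\mapsto\dist(x,\Omega_\delta^c)$ being $1$-Lipschitz and $\phi$ continuous), and since $\phi$ is increasing with $\phi=0$ on $\partial B_\delta$, the strict inequality $\dist(x,\Omega_\delta^c)>\delta$ gives $v(x)>0$ on $\Omega$. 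This strict inequality is the one point requiring a little care, and the step I would flag as the main (mild) obstacle: it simultaneously ensures positivity of $v$ inside $\Omega$ and, below, that the comparison function is evaluated strictly inside the annulus where $\phi$ solves the equation, and not on $\partial B_\delta$ where $\phi$ is merely continuous.

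Finally I would reproduce the touching argument of Theorem~\ref{thm.main}. Given $x_0\in\Omega$ and $\psi\in C^2$ touching $v$ at $x_0$ strictly from below, I would choose $y_0\in\partial\Omega_\delta$ with $\dist(x_0,\Omega_\delta^c)=|x_0-y_0|$ and set $\phi_{y_0}(x)=\phi(|x-y_0|)$. By the previous paragraph $|x_0-y_0|\in(\delta,r)$, so $\phi_{y_0}$ is defined on a full neighborhood of $x_0$, and there $L\phi_{y_0}+\lambda\phi_{y_0}\le0$ in the viscosity sense, since $\phi_{y_0}$ is a translate of the radial supersolution $x\mapsto\phi(|x|)$ and the operator $F$ carries no explicit $x$-dependence. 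Because $\dist(\,\cdot\,,\Omega_\delta^c)$ is $1$-Lipschitz and $\phi$ increasing, one has $v\le\phi_{y_0}$ near $x_0$ with equality at $x_0$; hence $\psi$ touches $\phi_{y_0}$ at $x_0$ strictly from below, so $F_*(\psi(x_0),\nabla\psi(x_0),D^2\psi(x_0))+\lambda\psi(x_0)\le0$. As $x_0$ and $\psi$ were arbitrary this shows $Lv+\lambda v\le0$ in $\Omega$, and we conclude $\lambda_1(\Omega)\ge\lambda$ exactly as in Theorem~\ref{thm.main}.
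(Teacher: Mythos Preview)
Your proposal is correct and follows essentially the same approach as the paper: define $v(x)=\phi(\dist(x,\Omega_\delta^c))$, verify it is positive on $\Omega$, and prove the supersolution property by comparing with the translated radial function $\phi_{y_0}(x)=\phi(|x-y_0|)$ for $y_0\in\partial\Omega_\delta$ realizing the distance. You have in fact been more careful than the paper on one point: you establish the \emph{strict} inequality $\dist(x,\Omega_\delta^c)>\delta$ via an open ball around $x\in\Omega$, which is what is actually needed for $v>0$ and for $\phi_{y_0}$ to be a supersolution in a full neighborhood of $x_0$.
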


\begin{proof}
The proof is completely analogous to that of Theorem~\ref{thm.main}.
We have to consider $v(x)=\phi(\dist(x,\Omega_\delta^c))$ which is positive in $\Omega$ since $\dist(x,\Omega_\delta^c)\geq \delta$ for all $x\in\Omega$.
And we prove that $v$ is a supersolution at $x_0$ by considering $\phi_{y_0}(x)=\phi(|x-y_0|)$ for 
$y_0\in\partial\Omega_\delta$ such that $\dist(x_0,\Omega_\delta^c)=\dist(x_0,y_0)$.
\end{proof}

Let us make some comments regarding $R_\delta$ which will be useful when applying Theorem~\ref{thm.delta}, see Example~\ref{nosol0}.
We observe that $R_\delta\geq R+\delta$ but equality is not true in general.
This can be seen by considering an U shaped domain, if $\delta$ is big enough, the `hole' inside the domain is covered and then $R_\delta$ is strictly bigger than $R+\delta$, see Figure~\ref{fig:Ushape}.
Let us prove that the equality holds for convex domains.

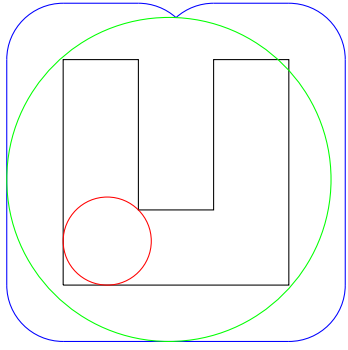
\begin{figure}
    \centering
\begin{tikzpicture}
\draw [blue](0-0.75,0) arc (180:270:0.75);
\draw [blue](3,0-0.75) arc (270:360:0.75);
\draw [blue](3+0.75,3+0) arc (0:90:0.75);
\draw [blue](2,3+0.75) arc (90:{90+asin(0.5/0.75)}:0.75);
\draw [blue](1+0.5,3+0.559) arc ({90-asin(0.5/0.75)}:90:0.75);
\draw [blue](0,3+0.75) arc (90:180:0.75);
\draw  (0,0) -- (3,0) -- (3,3) -- (2,3) -- (2,1) -- (1,1) -- (1,3) -- (0,3) -- (0,0);
\draw [blue](0,-0.75) -- (3,-0.75);
\draw [blue](3+0.75,0) -- (3+0.75,3);
\draw [blue](3,3+0.75) -- (2,3+0.75);
\draw [blue](1,3+0.75) -- (0,3+0.75);
\draw [blue](-0.75,3) -- (-0.75,0);
\draw [red] ({1/(1+1/sqrt(2))},{1/(1+1/sqrt(2))}) circle ({1/(1+1/sqrt(2))});
\def\r{(0.75+1.5)+(sqrt(0.75^2-0.5^2)+0.75+3)-sqrt(2*(0.75+1.5)*(sqrt(0.75^2-0.5^2)+0.75+3))}
\draw [green] ({\r-0.75},{\r-0.75}) circle ({\r});
\end{tikzpicture}
\caption{In black a U shaped domain ($\Omega$), in red the ball of maximum radius ($R$) included in $\Omega$, in blue $\Omega_\delta$ and in green the ball of maximum radius ($R_\delta$) included in $\Omega_\delta$, we have $R_\delta>R+\delta$.}
\label{fig:Ushape}
\end{figure}

\begin{lemma}
\label{lemmaRd}
When $\Omega$ is convex, $R_\delta=R+\delta$.
\end{lemma}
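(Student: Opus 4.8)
The plan is to show the two inclusions $R_\delta \le R+\delta$ and $R_\delta \ge R+\delta$; the second is already observed in the text (it holds for every domain), so the work is entirely in proving $R_\delta \le R+\delta$ when $\Omega$ is convex. Equivalently, I want to show that for every $x$ one has $\dist(x,\Omega_\delta^c) \le R+\delta$. First I would record the basic fact that $\Omega_\delta = \{x : \dist(x,\Omega) < \delta\}$ is itself convex whenever $\Omega$ is convex: this follows because $\Omega_\delta = \Omega + B_\delta(0)$ is a Minkowski sum of two convex sets, or directly from the fact that $x \mapsto \dist(x,\Omega)$ is a convex function when $\Omega$ is convex, so its sublevel set is convex. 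So it suffices to treat the statement "for a convex set $C$, the inradius of $C+B_\delta$ equals the inradius of $C$ plus $\delta$" — but since I only need the $\le$ direction and $\Omega_\delta$ is already in hand, I will argue directly.

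The key step is the following claim: if $B_\rho(z) \subset \Omega_\delta$, then $B_{\rho-\delta}(z) \subset \Omega$, which forces $\rho - \delta \le R$, i.e. $\rho \le R+\delta$; taking the supremum over balls contained in $\Omega_\delta$ then gives $R_\delta \le R+\delta$. To prove the claim I would use convexity of $\Omega$ together with the separating hyperplane theorem. Suppose, for contradiction, that some point $w \in B_{\rho-\delta}(z)$ lies outside $\Omega$. Since $\Omega$ is convex (and $w\notin\Omega$; one can reduce to the closed convex set $\bar\Omega$), there is a hyperplane $H$ with unit normal $\nu$ separating $w$ from $\Omega$, say $\langle y - w, \nu\rangle \le 0$ for all $y \in \Omega$ while $\langle w,\nu\rangle$ achieves the bound. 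Now consider the point $w' = w + \delta \nu$ pushed from $w$ a distance $\delta$ in the direction $\nu$ away from $\Omega$. On one hand $w' \in B_\rho(z)$ because $|w' - z| \le |w-z| + \delta < (\rho-\delta)+\delta = \rho$, so $w' \in \Omega_\delta$, hence $\dist(w',\Omega) < \delta$. On the other hand, every $y \in \Omega$ satisfies $\langle y - w',\nu\rangle = \langle y-w,\nu\rangle - \delta \le -\delta$, so the hyperplane through $w'$ with normal $\nu$ separates $w'$ from $\Omega$ at distance at least $\delta$, giving $\dist(w',\Omega) \ge \delta$ — a contradiction. Hence $B_{\rho-\delta}(z) \subset \Omega$, proving the claim.

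I would then close the argument: since for every ball $B_\rho(z)\subset\Omega_\delta$ we get $\rho \le R+\delta$, and $R_\delta$ is the supremum of such $\rho$ (attained at some point of $\bar\Omega_\delta$ as in the definition of $R$), we conclude $R_\delta \le R+\delta$. Combined with the general inequality $R_\delta \ge R+\delta$ noted before the lemma, this yields $R_\delta = R+\delta$, completing the proof.

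I expect the main obstacle to be the careful handling of the separating hyperplane when $w$ lies on $\partial\Omega$ rather than strictly outside, and the bookkeeping between open/closed versions of $\Omega_\delta$ (strict vs.\ non-strict distance inequalities); these are routine but must be stated cleanly. One should also make sure the reduction "it is enough to look at balls" is legitimate, i.e.\ that $R_\delta$ is genuinely the supremal radius of a ball inside $\Omega_\delta$ — this is immediate from $\dist(x,\Omega_\delta^c)$ being exactly the radius of the largest ball centered at $x$ inside $\Omega_\delta$.
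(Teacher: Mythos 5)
Your proposal is correct and follows essentially the same route as the paper: both reduce to showing that $B_{\rho}(z)\subset\Omega_\delta$ forces $B_{\rho-\delta}(z)\subset\Omega$, and both derive the contradiction from a separating hyperplane at a point of $B_{\rho-\delta}(z)\setminus\Omega$, with your explicit point $w'=w+\delta\nu$ playing exactly the role of the paper's ``points in that half-space at distance greater than $\delta$ from the hyperplane.''
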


\begin{proof}
Let $y\in\Omega_\delta$ and $\tilde R>0$ such that $B_{\tilde{R}}(y)\subset\Omega_\delta$. 
Let us show that $B_{\tilde{R}-\delta}(y)\subset\Omega$, and hence $R_\delta-\delta\leq R$ as desired. 

Suppose not, let $x\in B_{\tilde{R}-\delta}(y)\setminus \Omega$.
As $x\not\in\Omega$ and $\Omega$ is convex there exists a hyperplane though $x$ such that one of the half-spaces defined by this hyperplane is disjoint with $\Omega$.
Now, points in that half-space at distance greater that $\delta$ from the hyperplane are not in $\Omega_\delta$ but this is a contradiction since $B_{\tilde{R}-|x-y|}\subset\Omega_\delta$ and $\tilde{R}-|x-y|>\delta$.
\end{proof}

\begin{remark}\rm
We have considered the Dirichlet eigenvalue problem given by $$Lu+\lambda u=0$$ but we can consider a more general version of the problem $Lu+\lambda Mu=0$, where $M$ is a given differential operator, or even more generally $$G(D^2 u, \nabla u, u, \lambda)=0.$$
As examples of this general situation we can consider $M=|u|^{\alpha}u$ as in \cite{birindelli2006} and $G(D^2 u, \nabla u, u, \lambda)=\min\{-\Delta_\infty u, |\nabla u|-\lambda u\}$ as in \cite{juutinen1999eigenvalue}.
Theorems~\ref{thm.main} and \ref{thm.delta} also hold in this more general case.
\end{remark}

\section{Examples}
\label{Sect.examples}
\setcounter{equation}{0}

In this section we compute explicitly the bound for the principal eigenvalue of the homogeneous infinity laplacian, the homogeneous $p$-laplacian, the eigenvalue problem that rises when considering the limit as $p\to \infty$ of the problem for the $p$-laplacian and Pucci extremal operator.
We denote $\lambda_{1,\infty}$ and $\lambda_{1,p}$ the principal eigenvalue of the homogeneous infinity laplacian and the homogeneous $p$-laplacian, respectively.
For the homogeneous infinity laplacian we prove that the principal eigenvalue is given by $\lambda_{1,\infty}=\left(\frac{\pi}{2R}\right)^2$.
For the homogeneous $p$-laplacian our bound allows us to prove that $\lim_{p\to\infty} \lambda_{1,p}= \lambda_{1,\infty}$, see \cite{martinez2014limit} for a different proof of this result.

\begin{example}\rm
\label{inflap}
Here we consider the homogeneous infinity laplacian, which is given by
\[
\Delta_\infty^H u=\left(\frac{\nabla u}{|\nabla u|}\right)^t D^2 u \frac{\nabla u}{|\nabla u|}.
\]
The eigenvalue problem for this operator was studied in \cite{juutinen2007principal}.
We want to prove that
\[
\lambda_{1,\infty}(\Omega)= \left(\frac{\pi}{2R}\right)^2,
\]
which gives us an explicit new characterization of the eigenvalue.

On the one hand we have that $\lambda_{1,\infty}(B_R)= \left(\frac{\pi}{2R}\right)^2$.
It is easy to check that
\[
u(x)=\sin \left( \frac{(R-||x||)\pi}{2R}\right)
\]
is the corresponding eigenfunction.
On the other hand it is easy to verify that
\[
\phi(x)=\sin \left( \frac{||x||\pi}{2R}\right)
\]
satisfies $L \phi + \lambda_{1,\infty} \phi \leq  0$ in $B_R\setminus\{0\}$, it is radially increasing in $B_R$ and $\phi(0)=0$.
Hence Theorem~\ref{thm.main} allows us to conclude the desired result.

Moreover $v(x)=\phi(\dist(x,\Omega^c))$ is an eigenfunction for stadium like domains.
As can be seen in the proof of Theorem~\ref{thm.main}, it is a supersolution to the equation.
In the same way it can be shown that it is a subsolution by considering the eigenfunction in balls of radius $R$ contained in $\Omega$.
Let us mention that in \cite{crasta2016characterization} stadium like domains are characterized by considering a Serrin-type problem for the homogeneous infinity laplacian.

\end{example}

\begin{example}\rm
We consider the homogeneous $p$-laplacian, that is
\[
\Delta_p^H u= \frac{1}{p}|\nabla u|^{2-p}div(|\nabla u|^{p-2} \nabla u)= \frac{p-2}{p}\Delta_\infty^H u + \frac{1}{p} \Delta u.
\]

When we look for radial solutions to the equation $\Delta_p^H v+\lambda v=0$ in $B_R$, we obtain the equation
\begin{equation}
\label{ODE}
v_{rr}+\frac{n-1}{p-1}\frac{v_r}{r}+\frac{p\lambda}{p-1}v=0.
\end{equation}

The general solution is given by
\[
v(r)= c_1 r^\alpha J_{\alpha}(\eta r) + c_2 r^\alpha Y_{\alpha}(\eta r),
\]
where 
\[
\alpha=\frac{1-\frac{n-1}{p-1}}{2}=\frac{p-n}{2(p-1)} \quad \text{ , } \quad \eta=\sqrt{\lambda\frac{p}{p-1}}
\]
and $J_{\alpha}$  and $Y_{\alpha}$ are Bessel functions.

In \cite{kawohl2014radial} the eigenvalue for a ball $B_R$ is computed, 
\[
\lambda_p(B_R)=\frac{p-1}{p}\left(\frac{\mu^{(-\alpha)}_1}{R}\right)^2,
\]
where $\mu^{(-\alpha)}_1$ is the first zero of the Bessel function $J_{-\alpha}$.
This implies that
\[
\lambda_{1,p}(\Omega)\leq 
\frac{p-1}{p}\left(\frac{\mu^{(-\alpha)}_1}{R}\right)^2.
\]

We want to construct an appropriate function to apply Theorem~\ref{thm.main}.
We consider the case $p>n$ (we analyse the case $p\leq n$ in the following example).
We observe that
\[
0<\alpha=\frac{p-n}{2(p-1)}.
\]
As we require $v(0)=0$, we have to take
\[
v(r)= c r^\alpha J_{\alpha}(\eta r).
\]
Then
\[
v'(r)= c \eta r^\alpha J_{\alpha-1}(\eta r)
\] 
and we can take $v$ increasing up to the first zero of the derivative.
We impose $v'(R)=0$, that is
\[
\frac{\mu^{(\alpha-1)}_1}{R}= \eta=\sqrt{\lambda\frac{p}{p-1}}.
\]
Then,
\[
\frac{p-1}{p}\left(\frac{\mu^{(\alpha-1)}_1}{R}\right)^2
\leq \lambda_{1,p}(\Omega),
\]
and we have that
\[
\frac{p-1}{p}\left(\frac{\mu^{(\alpha-1)}_1}{R}\right)^2
\leq \lambda_{1,p}(\Omega)\leq 
\frac{p-1}{p}\left(\frac{\mu^{(-\alpha)}_1}{R}\right)^2.
\]

Now let us consider the limit as $p\to\infty$.
Since $\alpha\to\frac{1}{2}^-$, we have that 
\[
\alpha -1\to -\frac{1}{2}^- \text{ and } -\alpha\to -\frac{1}{2}^+
\]
and hence
\[
\frac{p-1}{p}\left(\frac{\mu^{(\alpha-1)}_1}{R}\right)^2\to \left(\frac{\pi}{2R}\right)^2
\quad \text{ and } \quad
\frac{p-1}{p}\left(\frac{\mu^{(-\alpha)}_1}{R}\right)^2\to \left(\frac{\pi}{2R}\right)^2.
\]
We have proved that
\[
\lim_{p\to\infty} \lambda_{1,p}(\Omega)= \left(\frac{\pi}{2R}\right)^2 = \lambda_{1,\infty}(\Omega).
\]

\end{example}

\begin{example}\rm
\label{nosol0}
If we consider the case $p\leq n$ in the previous example, the ordinary differential equation \eqref{ODE} has no non-trivial solution with $v(0)=0$.
Hence we apply Theorem~\ref{thm.delta}.

We can take
\[
v(r)= c r^\alpha J_{\alpha}(\eta r).
\]
Then
\[
v'(r)= c \eta r^\alpha J_{\alpha-1}(\eta r).
\]
If $x$ is a zero of $J_{\alpha}$ and $y$ the first zero of $J_{\alpha-1}$ after $x$, we can choose the sign of $c$ such that $v$ is an increasing positive function in the interval $(x/\eta,y/\eta)$.
Let us assume that we can choose $\delta$ such that $\delta/R_\delta=x/y$ (we can do this when $\Omega$ is convex and hence $R_\delta=R+\delta$ as stated in Lemma~\ref{lemmaRd}). 
If we take $\eta=x/\delta=y/R_\delta$ we obtain that $v$ is an increasing positive function in the interval $(\delta,R_\delta)$ and we can apply Theorem~\ref{thm.delta}.
In the case that $R_\delta=R+\delta$, $\delta/R_\delta=x/y$ implies that $\delta=\frac{Rx}{y-x}$.
Then $\eta=\frac{y-x}{R}$, and we obtain
\[
\frac{p-1}{p}\left(\frac{y-x}{R}\right)^2
\leq \lambda_{1,p}(\Omega).
\]
Let us observe that the same can be done with $Y_{\alpha}$ instead of $J_{\alpha}$.

Let us make some explicit computation in a particular case, for the Laplacian in dimension 3.
We avoid the term $1/p$ in the operator and consider the equation $\Delta u+\lambda u=0$ in $\Omega\subset\R^3$.
We have $\alpha=-1/2$,
\[
x^{-1/2}J_{-1/2}(x)=\sqrt{\frac{2}{\pi}} \frac{cos(x)}{x}.
\]
The distance between the zeros of the function and the subsequent zero of its derivative increases and approaches $\pi/2$.
Hence, we obtain
\[
\left(\frac{\pi}{2R}\right)^2
\leq \lambda_{1,2}(\Omega).
\]

Let us compare our result to the classical Rayleigh-Faber-Krahn inequality which states
\[
\lambda_1(\Omega)\geq|\Omega|^{-\frac{2}{n}}C_n^{\frac{2}{n}}\left(\mu_1^{(\frac{n}{2}-1)}\right)^2 
\]
where $C_n$ is the volume of the $n$-dimensional unit ball and
 $\mu^{(\alpha)}_1$ is the first zero of the Bessel function $J_{\alpha}$.
This inequality is sharp for the unit ball, in $\R^3$ we have
\[
\lambda_{1}(B_1)=\left(\frac{\pi}{R}\right)^2.
\]
If $|\Omega|\geq 8|B_R|$, we have
\[
|\Omega|^{-\frac{2}{3}}C_3^{\frac{2}{3}}\left(\mu_1^{\frac{3}{2}-1}\right)^2  \leq
|8B_R|^{-\frac{2}{3}}|B_1|^{\frac{2}{3}}\left(\mu_1^{(\frac{1}{2})}\right)^2=
\left(\frac{\pi}{2R}\right)^2,
\]
where we have used that $\mu_1^{(\frac{1}{2})}=\pi$.
Hence, we have obtained that our inequality is sharper in this case. 
This holds, for example, for a cylinder tall enough.

\end{example}

\begin{example}\rm
We consider the equation
\[
\min\{-\Delta_\infty u, |\nabla u|-\lambda u\}=0,
\]
where 
\[
\Delta_\infty u=\left(\nabla u\right)^t D^2 u \nabla u
\]
is the infinity laplacian.
This equation arises when considering the limit as $p\to \infty$ in the eigenvalue problem for the $p$-laplacian, see \cite{juutinen1999eigenvalue}.

In this case the principal eigenvalue is $\frac{1}{R}$, we can prove this fact in the same way as in Example~\ref{inflap} by considering
$u(x)= R-||x||$ and $\phi(x)=||x||$.

\end{example}

\begin{example}\rm
We consider Pucci's extremal operator, that is
\[
M_{\gamma,\Gamma}^+(D^2u)=\Gamma \sum_{e_i>0} e_i+ \gamma \sum_{e_i<0} e_i,
\]
where $e_i$ are the eigenvalues of $D^2u$.

When $u(x)=\phi(r)$ is radial, the eigenvalues are $\phi''(r)$ with multiplicity one and $\phi'(r)/r$ with multiplicity $n-1$.
Since we require the function to be increasing, we have $\phi'(r)/r>0$.
If we attempt to find a solution with $\phi''(r)\leq 0$,
we obtain the equation
\[
\phi''+\Gamma(n-1){\gamma}\frac{\phi'}{r}+\frac{\lambda}{\gamma}\phi=0.
\]
As we want $\phi, \phi'\geq 0$ for those solutions we will have $\phi''(r) \leq 0$ as desired.

Again, the general solution is given by
\[
v(r)= c_1 r^\alpha J_{\alpha}(\eta r) + c_2 r^\alpha Y_{\alpha}(\eta r),
\]
where 
\[
\alpha=\frac{1-\frac{\Gamma(n-1)}{\gamma}}{2}=\frac{\gamma-\Gamma(n-1)}{2\gamma} \quad \text{ and } \quad \eta=\sqrt{\frac{\lambda}{\gamma}}.
\]

We can obtain the bound as in the previous examples.
Let us illustrate this with a particular case.
With $\gamma=1$, $\Gamma=2$ in dimension 2 we have $\alpha=-1/2$ as in the end of Example~\ref{nosol0}, we obtain 
$
\left(\frac{\pi}{2R}\right)^2
\leq \lambda_{1}(\Omega)
$.

\end{example}

{\bf Ackledgements} This work has been partially supported by Consejo Nacional de
Investigaciones Cient\'ificas y T\'ecnicas (CONICET-Argentina).

\bibliography{eigen_arxiv2}
\bibliographystyle{plain}
\end{document}